\documentclass[a4paper,11pt,reqno]{amsart}


\voffset=-1.5cm \textheight=23cm 
\hoffset=-.5cm \textwidth=16cm
\oddsidemargin=1cm \evensidemargin=-.1cm
\footskip=35pt \linespread{1.2}
\parindent=20pt

\sloppy
\allowdisplaybreaks


\usepackage{amsthm,amsmath,amsfonts,amssymb}
\usepackage{cases}
\usepackage{xfrac}
\usepackage{tikz}

\usepackage{xcolor}

\usepackage[
colorlinks=true,
urlcolor=teal,
citecolor=magenta,
linkcolor=teal,
linktocpage,
pdfpagelabels,
bookmarksnumbered,
bookmarksopen]
{hyperref}
\usepackage[
capitalize, 
nameinlink,
noabbrev]
{cleveref}

\def\de{\mathrm{d}}
\def\Fp{F_p} 

\def\step#1#2{\par\noindent{\underline{\it Step~#1.}}\emph{ #2}\\}
\def\bal{\begin{aligned}}
\def\eal{\end{aligned}}
\def\R{\mathbb{R}}
\def\N{\mathbb{N}}
\font\script=rsfs10 at 11pt
\def\eps{\varepsilon}
\def\H{{\mbox{\script H}\,\,}}
\def\Chi#1{\hbox{{\large $\chi$}{\Large $_{_{#1}}$}}}


\DeclareMathOperator{\diam}{diam}
\DeclareMathOperator{\essinf}{ess\,inf}



\theoremstyle{plain}
\newtheorem{thm}{Theorem}[section]
\newtheorem*{thm*}{Theorem}
\newtheorem{lem}[thm]{Lemma}

\newtheorem{cor}[thm]{Corollary}

\theoremstyle{definition}

\theoremstyle{remark}
\newtheorem{rem}[thm]{Remark}

\numberwithin{equation}{section}


\title[Cylindrical estimates for the Cheeger constant]{Cylindrical estimates for the\\Cheeger constant and applications}

\author[A.~Pratelli]{Aldo Pratelli}
\address[Aldo Pratelli]{Dipartimento di Matematica, Universit\`a di Pisa, largo B.~Pontecorvo 5, IT--56127 Pisa}
\email{aldo.pratelli@unipi.it}
\author[G.~Saracco]{Giorgio Saracco}
\address[Giorgio Saracco]{Dipartimento di Matematica e Informatica ``Ulisse Dini'', Universit\`a di Firenze, viale Morgagni 67/A, IT--50134 Firenze}
\email{giorgio.saracco@unifi.it}%

\thanks{The first author has been partially supported by the project PRIN Geometric Evolution Problems and Shape Optimization (GEPSO), 2022E9CF89. The second author is member of INdAM--GNAMPA and has been partially supported by the INdAM--GNAMPA 2024 Project \textit{Ottimizzazione e disuguaglianze funzionali per problemi geometrico--spettrali locali e nonlocali}, codice CUP\_E53C23001670001.}

\subjclass[2020]{Primary: 49Q10. Secondary: 49R05, 35P15}

\keywords{Cheeger constant, convex sets, shape optimization, cylinders, asymptotic estimates}


\begin{document}

\begin{abstract}
We prove a lower bound for the Cheeger constant of a cylinder $\Omega\times (0,L)$, where $\Omega$ is an open and bounded set. As a consequence, we obtain existence of minimizers for the shape functional defined as the ratio between the first Dirichlet eigenvalue of the $p$-Laplacian and the $p$-th power of the Cheeger constant, within the class of bounded convex sets in any $\R^N$. This positively solves open conjectures raised by Parini (\emph{J.\ Convex Anal.}\ (2017)) and by Briani--Buttazzo--Prinari (\emph{Ann.\ Mat.\ Pura Appl.}\ (2023)).
\end{abstract}


\maketitle

\section{Introduction}

The Cheeger constant of an open, bounded set $\Omega\subseteq \R^N$ is defined as
\[
h(\Omega) :=\inf \left\{\,\frac{P(E)}{|E|}\,:\, E\subseteq \Omega,\, |E|>0 \,\right\}\,.
\]
The constant owes its name to Cheeger, who in~\cite{Che70} used its Riemannian counterpart to provide a lower bound to the first eigenvalue of the Dirichlet Laplace--Beltrami operator.

We recall that, in the Euclidean setting, the first eigenvalue of the $p$-Dirichlet Laplacian ($p>1$), for an open, bounded set $\Omega\subseteq \R^N$ is variationally characterized as
\[
\lambda_p(\Omega) := \inf \left\{\,\int_\Omega |\nabla u|^p\,\mathrm{d}x\,:\, u\in W^{1,p}_0(\Omega)\, \text{and}\, \|u\|_p=1 \,\right\}\,,
\]
where $W^{1,p}_0(\Omega)$ is taken as the closure of $C^\infty_c(\Omega)$ with respect to the Sobolev norm $\|\cdot\|_p + \|\nabla(\cdot)\|_p$. Using this notation, one has
\begin{equation}\label{eq:mela}
\lambda_p(\Omega) \ge \left(\frac{h(\Omega)}{p}\right)^p\,,
\end{equation}
refer to~\cite[Theorem~3.1]{Avi97} and~\cite[Appendix]{LW97}. Further, assuming $\Omega$ to be regular enough (Lipschitz suffices), $\lim_{p \to 1^+}\lambda_p(\Omega)=h(\Omega)$ holds, as first proved in~\cite{KF03}. Inequality~\eqref{eq:mela} is very robust, as noticed even earlier than Cheeger in the linear case $p=2$ by Maz'ya~\cite{Maz62,Maz62-2}, and can be extended to the abstract setting of perimeter-(topological) measure spaces for Borel sets, see, e.g.,~\cite{FPSS22} and the references therein. For an overview of the Cheeger problem, we refer the interested reader to the surveys~\cite{Leo15, Par11}. Rearranging~\eqref{eq:mela}, one obtains
\begin{equation}\label{eq:cheeger_bound}
\Fp[\Omega] := \frac{\lambda^{\sfrac1p}_p(\Omega)}{h(\Omega)} \ge \frac{1}{p}\,.
\end{equation}
Hence, one has a lower bound to the spectral operator $\Fp[\,\cdot\,]$. It is therefore natural to wonder whether this bound is attained and in general if the minimization of $\Fp[\,\cdot\,]$ has solutions in some suitable class of subsets of $\mathbb{R}^N$. Here, we shall focus on the class of convex subsets of $\mathbb{R}^N$.

In the convex case, the bound is known not to be sharp, as tighter bounds have been proved in some special cases. Namely, for $p=2$ and general $N$, it holds
\begin{equation}\label{eq:parini_imp}
F_2[\Omega] \geq \frac{\pi}{2N}\,,
\end{equation}
as proved in~\cite[Proposition~5.1]{Par17} for $N=2$ and later noticed to hold for any $N$ in~\cite[Introduction]{Fto21}. This provides a tighter bound in dimension $N=2,3$. In dimension $N=2$, a further refinement was given in~\cite[Theorem.~1.1]{Fto21}, where it was shown that
\[
F_2[\Omega] \ge \frac{\pi j_{01}}{2j_{01}+\pi}\,,
\]
where $j_{01}$ denotes the first zero of the first Bessel function. An inequality similar in spirit to~\eqref{eq:parini_imp} holds for general $p$, refer to~\cite[Proposition~3.2]{BBP22}, namely,
\[
F_p[\Omega] \ge \max\left\{\,\frac 1p ; \frac{\pi(p-1)^{\frac 1p}}{N p \sin\left(\frac \pi p\right)}\,\right\}\,.
\]
Hence, for dimensions $N\le \bar{N}(p)$ one gets a tighter bound than the one in~\eqref{eq:cheeger_bound}. As a side result of this paper, we shall obtain that~\eqref{eq:cheeger_bound} is not sharp in any dimension $N$ and for any $p$. For the sake of completeness, we mention that in the recent preprint~\cite[Remark~4.2]{CP24} it was provided an improvement (depending on $N$ and $p$) on~\eqref{eq:mela} under the lone openness of $\Omega$, so that, one gets again that~\eqref{eq:cheeger_bound} is not sharp for dimensions $N$ smaller than some $\bar N(p)$ when restricting competitors to open sets. 

Regarding existence of minimizers, the first step in this direction was made in~\cite[Proposition~5.2]{Par17}, where Parini proved existence among the class of convex subsets of the plane $\mathbb{R}^2$, in the case $p=2$. Later on, Ftouhi~\cite[Theorem~1.2]{Fto21} provided a different proof of existence, along with a sufficient criterion to determine whether minimizers among convex subsets of $\mathbb{R}^{N}$ exist. Recently, Buttazzo, Briani and Prinari extended this criterion to general $p$, and proved existence for $p\ge 2$ among convex subsets of the plane $\mathbb{R}^2$, see~\cite[Theorem~3.6]{BBP22}. To the best of our knowledge, the existence of minimizers among open subsets of $\mathbb{R}^N$ is completely open (except for $N=1$ which is trival, see~\cite[Proposition~2.1]{BBP22}).

In this paper, by exploiting the above-mentioned criteria, we are able to show existence of minimizers of $\Fp[\,\cdot\,]$ among convex subsets of $\mathbb{R}^N$, for any $N\ge 2$ and $p>1$, see \cref{thm:ex_lp/h^p}. The proof relies on some cylindrical estimates on the Cheeger constant. The key one, see \cref{thm:stime_h}, is the following: given an open and bounded subset $\Omega$ of $\mathbb{R}^N$, not necessarily convex, and given $L\ge 1$, we show that
\begin{equation}\label{eq:estimate_h}
h(\Omega) + \frac{c(\Omega)}{L} \le h(\Omega_L) < h(\Omega) + \frac{2}{L}\,,
\end{equation}
where $\Omega_L:=\Omega\times (0,L)$ and $c(\Omega)>0$ is a constant depending only on $\Omega$. In other words, we can estimate both from above and from below the Cheeger constant of the $(N+1)$-dimensional cylinder $\Omega_L$ with the Cheeger constant of its cross-section plus a non-zero term that goes like the inverse of the height of the cylinder. The proof of \cref{thm:ex_lp/h^p} essentially follows from the
combination of~\eqref{eq:estimate_h} and the criteria proved in~\cite{BBP22,Fto21}.

The paper is organized as follows: in \cref{sec:estimates}, we prove the cylindrical estimates, refer to \cref{thm:stime_h}; in \cref{sec:appl}, we use them to obtain \cref{thm:ex_lp/h^p}.

\subsection*{Acknowledgements} The authors wish to thank Giuseppe Buttazzo for making them aware of the problem and the two referees for their valuable comments which, in particular, pointed us how to remove the assumption $L\ge 1$ of~\cref{thm:stime_h}, up to assuming the convexity of the cross-section, see \cref{rem:lge1}.

\section{Estimates}\label{sec:estimates}

This section is devoted to show the key estimate~\eqref{eq:estimate_h} for cylinders. First of all, we recall some standard notation, see for instance~\cite{AFP00book}. Given any $M\in \mathbb{N}$, we let $\H^M$ be the standard  $M$-dimensional Hausdorff measure. Given a set of finite perimeter $A\subset \mathbb{R}^{M+1}$, we denote by $\partial^*A$ its reduced boundary, and we recall that $P(A; B)$, the perimeter of $A$ relative to a Borel set $B$, equals $\H^{M}(\partial^*A \cap B)$. Finally, if $B=\mathbb{R}^{M+1}$, one writes $P(A)$ in place of $P(A;\mathbb{R}^{M+1})$.

Second, we fix a quick notation that will be used through the rest of the paper. Given any set $\Omega\subseteq\R^N$, for any $L>0$ the cylinder $\Omega\times(0,L)\subseteq \R^{N+1}$ will be denoted by $\Omega_L$. Moreover, for any subset $C\subseteq \Omega_L$ and $t\in [0,L]$ we will denote by $C_t$ the horizontal section of $C$ at height $t$, that is,
\[
C_t :=
\begin{cases}
C\cap (\Omega \times\{t\}), \qquad &t\in (0,L)\,,\\
\partial C \cap  (\Omega \times\{t\}), &t\in \{0,L\}\,.
\end{cases}
\]
Moreover, with some abuse of notation we shall write $C_t\subseteq \Omega$, in place of $\Pi_N(C_t) \subseteq \Omega$, being $\Pi_N$ the restriction to the first $N$ coordinates. Notice also that, by the well-known Vol'pert Theorem (see for instance~\cite[Theorem~6.2]{FMP08}), if $C$ is a set of finite perimeter in $\Omega_L$ then for a.e.\ $t\in [0,L]$ the section $C_t$ is a well-defined set with finite perimeter in $\R^N$, where, from now on, by \emph{well-defined} we mean uniquely defined up to sets of zero $\H^N$-measure.

The main result of the section reads as follows.
\begin{thm}\label{thm:stime_h}
Let $\Omega\subseteq\R^N$ be open and bounded. There exists a constant $c(\Omega)>0$ such that for any $L \ge 1$ one has
\begin{equation}\tag{\ref{eq:estimate_h}}
h(\Omega) + \frac{c(\Omega)}{L} \le h(\Omega_L) \le h(\Omega) + \frac{2}{L}\,.
\end{equation}
\end{thm}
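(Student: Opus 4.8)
The plan is to prove the two inequalities separately; the upper bound is elementary, while the lower bound carries all the difficulty.

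For the upper bound $h(\Omega_L)\le h(\Omega)+\frac 2L$ I would use product sets as competitors and let $L\ge 1$ play no role. Given any $E\subseteq\Omega$ with $|E|>0$, the cylinder $E\times(0,L)\subseteq\Omega_L$ has $(N{+}1)$-volume $L|E|$ and perimeter $P(E)\,L+2|E|$, the two summands being the lateral surface $\partial^*E\times(0,L)$ and the two horizontal caps of $\H^N$-measure $|E|$ each. Its Cheeger quotient is therefore exactly $\frac{P(E)}{|E|}+\frac 2L$, and taking the infimum over $E$, by the very definition of $h(\Omega)$, yields the claim.

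For the lower bound I would fix a competitor $C\subseteq\Omega_L$ of finite perimeter (e.g.\ a Cheeger minimiser of $\Omega_L$, which exists), write $\nu=(\nu_x,\nu_t)\in\R^N\times\R$ for the outer normal on $\partial^*C$, and set $m(t):=\H^N(C_t)$. Two slicing facts drive the argument. Horizontally, Vol'pert's theorem together with the definition of $h(\Omega)$ gives
\[
\int_{\partial^*C}|\nu_x|\,\de\H^N=\int_0^L P(C_t)\,\de t\ge h(\Omega)\int_0^L m(t)\,\de t=h(\Omega)\,|C|,
\]
which already yields $h(\Omega_L)\ge h(\Omega)$. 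Vertically, after extension by $0$ one has $m\in BV(\R)$ with $|Dm|(\R)\le\int_{\partial^*C}|\nu_t|\,\de\H^N$, and since $m\ge 0$ vanishes outside $[0,L]$ with maximum $M:=\operatorname*{ess\,sup}_{t}m(t)$, the profile must rise from $0$ to $M$ and fall back, so
\[
\int_{\partial^*C}|\nu_t|\,\de\H^N\ge 2M\ge \frac{2|C|}{L}.
\]

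The obstruction is that $P(C)=\int_{\partial^*C}\sqrt{|\nu_x|^2+\nu_t^2}\,\de\H^N$ does not split additively, so the two bounds above cannot simply be summed: a crude Cauchy–Schwarz combination only produces a gain of order $1/L^2$. To recover the correct $1/L$ rate I would split $\partial^*C=A\cup B$ by a fixed slope threshold, $A:=\{\nu_t^2\ge 3/4\}$ (cap-like) and $B$ its complement, estimate $P(C)\ge\int_A|\nu_t|+\int_B|\nu_x|$, and introduce the horizontal excess $\mathcal E:=\int_0^L P(C_t)\,\de t-h(\Omega)|C|\ge\int_0^L\psi(m(t))\,\de t$, where $\psi(v):=\phi(v)-h(\Omega)v\ge 0$ and $\phi$ is the relative isoperimetric profile of $\Omega$. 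Using $|\nu_x|\le\frac{1}{\sqrt3}|\nu_t|$ on $A$, this rearranges into the master inequality
\[
P(C)\ge h(\Omega)|C|+\mathcal E+\Big(1-\tfrac{1}{\sqrt3}\Big)\int_A|\nu_t|\,\de\H^N,
\]
after which everything reduces to the key claim $\mathcal E+\int_A|\nu_t|\ge c(\Omega)\,|C|/L$, and dividing by $|C|$ closes the proof.

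The main obstacle is precisely this claim: converting the Pythagorean vertical reserve $2M\ge 2|C|/L$ into an \emph{additive} gain. The governing mechanism is a dichotomy through $\psi$. If the cap-like set $A$ already carries a definite share of the vertical perimeter, then $\int_A|\nu_t|\gtrsim M\gtrsim|C|/L$ and one is done; this covers (possibly tilted) cylinders, whose flat caps lie in $A$. The delicate case is when the vertical perimeter hides in the moderate-slope set $B$, i.e.\ when $m$ climbs from $0$ to $M$ and back using only gentle slopes: then $m$ must spend a definite $t$-length away from the Cheeger volume $v^*(\Omega)$, where $\psi>0$, forcing $\mathcal E$ to be bounded below; quantifying this against $|C|/L$ is where the hypothesis $L\ge 1$ and the strict positivity and growth of $\psi$ on $(0,v^*)$ enter, and where I expect the only genuinely technical estimate. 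The borderline regime $M\approx v^*$, in which $\psi$ degenerates, I would treat separately by a quantitative-stability argument: sections nearly of Cheeger volume and nearly constant in $t$ force $C$ to be close to a cylinder, whose caps then repopulate $A$. Combining the regimes produces a constant $c(\Omega)>0$ depending only on $\Omega$ through $v^*(\Omega)$ and the profile $\psi$.
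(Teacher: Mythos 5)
Your upper bound is correct and is essentially the paper's argument (the paper plugs in $F\times[0,L]$ for a single Cheeger set $F$ of $\Omega$; taking the infimum over all $E\subseteq\Omega$ as you do is the same computation). Your reduction of the lower bound is also sound as far as it goes: the horizontal slicing $\int_0^L P(C_t)\,\de t\ge h(\Omega)|C|$, the vertical bound $|Dm|(\R)\ge 2M\ge 2|C|/L$, and the master inequality $P(C)\ge h(\Omega)|C|+\mathcal{E}+\bigl(1-\tfrac{1}{\sqrt3}\bigr)\int_A|\nu_t|\,\de\H^N$ are all correct. But the proof is not complete, and the hole sits exactly where you place it: the key claim $\mathcal{E}+\int_A|\nu_t|\ge c(\Omega)|C|/L$ is never proved, and the device you propose for the delicate regime fails. ``Sections nearly of Cheeger volume and nearly constant in $t$ force $C$ to be close to a cylinder, whose caps then repopulate $A$'' is false as stated: $L^1$-closeness to a cylinder carries no information on the slopes of $\partial^*C$. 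Concretely, replace the flat caps by conical ends of sub-threshold slope and corrugate the lateral wall with oscillations of amplitude $\delta$: then $\int_A|\nu_t|\approx 0$, the sections stay within $\delta$ of a Cheeger volume so the corrugation contributes excess at most $\sup_{|v-v^*|\le\delta}\psi(v)\to0$ per unit height, and the only definite contribution to $\mathcal{E}$ comes from the conical transitions. Quantifying that transition excess uniformly over all competitors (including profiles $m$ that linger near $0$, where $\psi$ also degenerates) is the entire content of the theorem, and it is missing. A second, fixable, defect: you write as if there were a single ``Cheeger volume $v^*(\Omega)$'' with $\psi>0$ on $(0,v^*)$, but Cheeger sets need not have a unique volume (if $E_1\ne E_2$ are Cheeger sets, so is $E_1\cup E_2$; take $\Omega$ a union of two disjoint equal balls), so $\psi$ vanishes on a whole set of volumes, and even its strict positivity on compact subsets of the remaining range requires a compactness argument you do not supply.

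For comparison, the paper never splits $\partial^*C$ by normal slope. It works at the level of the section-volume function: it sets $v:=\inf\{|C|:\,C\text{ a Cheeger set of }\Omega\}$, proves by a compactness/lower-semicontinuity argument that $\varepsilon:=\inf\{P(E)/|E|:\,E\subseteq\Omega,\ |E|\le v/2\}-h(\Omega)>0$ (this uniform gap at the fixed threshold $v/2$ is precisely what replaces your degenerate profile $\psi$ near Cheeger volumes), and then, for a Cheeger set $C^*$ of $\Omega_L$, runs a four-case analysis on $|C^*_0|$, on $\tau:=\essinf\{t:\,|C^*_t|\ge v/2\}$, and on $V:=|C^*\cap(\Omega\times[0,\tau])|$: a large bottom section yields a cap term of size $v/4$ (here your observation $|Dm|(\R)\ge 2M$ is a neat substitute for the paper's one-sided trace \cref{supercont}); small sections below $\tau$ yield the gain $\varepsilon V$; if $\tau<L$ a vertical projection between heights $0$ and $\tau$ forces perimeter at least $v/4$ in the lower strip; and if $\tau=L$ all sections are small, giving $h(\Omega_L)\ge h(\Omega)+\varepsilon\ge h(\Omega)+\varepsilon/L$, which is the only point where $L\ge1$ enters. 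Your framework could plausibly be completed by importing exactly this thresholding machinery, but as written the argument has a genuine gap at its crux.
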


Before proving this result, we need to show the following preliminary lemma. As recalled, by Vol'pert Theorem, if $D\subset \Omega_L$ is a set of finite perimeter, then almost every section $D_t$ is well-defined. The lemma states that for those values $t\in [0,L]$ for which the section is not well-defined, one can however ``identify'' it either as the unique limit of sections above it (i.e., for $s\searrow t$) or as the unique limit (possibly different from the previous one) of sections below it (i.e., for $s\nearrow t$).

\begin{lem}\label{supercont}
Let $D$ be any set of finite perimeter contained in the cylinder $\Omega_L$. Then, for every $0\leq t<L$, there exists a (unique up to negligible sets) set $D^+_t\subseteq\Omega$ such that
\begin{equation}\label{newcla}
\lim_{s\searrow t} \big| D_s \triangle D^+_t \big| = 0\,.
\end{equation}
Analogously, for every $0<t\leq L$ there exists a (unique up to negligible sets) set $D^-_t\subseteq\Omega$ such that 
\[
\lim_{s\nearrow t} \big| D_s \triangle D^-_t \big| = 0\,.
\]
Moreover, $D_t^+=D_t^-$ ($\H^N$-a.e.) for $t\in (0,L)$ except at most countably many $t$.
\end{lem}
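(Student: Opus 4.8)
The plan is to read the slicing $t\mapsto\chi_{D_s}$ as an $L^1(\Omega)$-valued curve of bounded variation and to produce the one-sided limits by completeness of $L^1$. Since $D$ has finite perimeter in $\Omega_L$, its characteristic function $\chi_D$ belongs to $BV(\Omega_L)$; denote by $\partial_{N+1}\chi_D$ the last component of the distributional gradient $\nabla\chi_D$, a finite signed Radon measure on $\Omega_L$, and set $\mu(I):=\big|\partial_{N+1}\chi_D\big|(\Omega\times I)$ for every Borel $I\subseteq(0,L)$, so that $\mu$ is a finite positive Radon measure on $(0,L)$. The central estimate I would establish is
\begin{equation}\label{eq:slicebd}
\big|D_s\triangle D_{s'}\big|\le \mu\big((s,s')\big)\qquad\text{for a.e.\ } 0<s<s'<L\,,
\end{equation}
where ``a.e.'' refers to the full-measure set of heights at which the Vol'pert section is well-defined.

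To obtain~\eqref{eq:slicebd} I would invoke the one-dimensional slicing theory of $BV$ functions (see~\cite{AFP00book}): for $\H^N$-a.e.\ $x\in\Omega$ the single-variable function $v_x(\cdot):=\chi_D(x,\cdot)$ lies in $BV\big((0,L)\big)$, the Fubini-type identity $\mu(I)=\int_\Omega |Dv_x|(I)\,\de x$ holds (here $|Dv_x|$ is the total variation of the one-variable derivative), and for every pair of continuity points $s<s'$ of $v_x$ one has $|v_x(s')-v_x(s)|\le |Dv_x|\big((s,s')\big)$. Integrating this pointwise bound in $x$ and recognizing $\int_\Omega |v_x(s')-v_x(s)|\,\de x=\big|D_s\triangle D_{s'}\big|$ at good heights yields~\eqref{eq:slicebd}. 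With~\eqref{eq:slicebd} at hand, the existence of the right limit follows by completeness: fix $0\le t<L$; since $\mu$ is finite, $\mu\big((t,t+\delta)\big)\to0$ as $\delta\searrow0$, hence for good heights $s,s'\in(t,t+\delta)$ one has $\big|D_s\triangle D_{s'}\big|\le\mu\big((t,t+\delta)\big)\to0$, so $\{\chi_{D_s}\}$ is a Cauchy family in $L^1(\Omega)$ along good $s\searrow t$. Its limit $g\in L^1(\Omega)$ is $\{0,1\}$-valued, being the a.e.\ limit of characteristic functions along a subsequence, whence $g=\chi_{D_t^+}$ for a set $D_t^+\subseteq\Omega$ unique up to negligible sets; this is exactly~\eqref{newcla}. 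The left limit $D_t^-$ for $0<t\le L$ is produced identically with $s\nearrow t$.

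For the countable-exception statement I would pass to the limit $s'\nearrow t$, $s\searrow t$ in~\eqref{eq:slicebd}. The left-hand side converges to $\big|D_t^+\triangle D_t^-\big|$ (since $\chi_{D_{s'}}\to\chi_{D_t^-}$ and $\chi_{D_s}\to\chi_{D_t^+}$ in $L^1$ and $|A\triangle B|=\|\chi_A-\chi_B\|_{L^1}$), while the intervals $(s',s)$ decrease to $\{t\}$, so by continuity from above of the finite measure $\mu$ the right-hand side tends to $\mu(\{t\})$. Thus $\big|D_t^+\triangle D_t^-\big|\le\mu(\{t\})$, and since a finite measure has at most countably many atoms, $\mu(\{t\})=0$ for all but countably many $t\in(0,L)$; at every such $t$ one gets $D_t^+=D_t^-$ up to $\H^N$-negligible sets.

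The main obstacle is the rigorous justification of~\eqref{eq:slicebd}, and precisely the identification of the Vol'pert sections $D_s$ (defined $\H^N$-a.e.\ as finite-perimeter sets) with the slices of the precise representative used in one-dimensional slicing: one must check that for a full-measure set of heights $s$ the equality $\chi_{D_s}(x)=v_x(s)$ holds for $\H^N$-a.e.\ $x$, and that $s,s'$ are continuity points of $v_x$ for a.e.\ $x$ simultaneously, so that the scalar bound integrates to~\eqref{eq:slicebd}. Once this bookkeeping is secured, the remaining assertions are soft consequences of the completeness of $L^1(\Omega)$ and of the finiteness of $\mu$.
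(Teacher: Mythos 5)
Your proposal is correct, but it takes a genuinely different route from the paper, so a comparison is in order. The paper's proof is deliberately elementary: it bounds $\big|D_s\triangle D_{s'}\big|$ by the full strip perimeter $P\big(D;\Omega\times(s,s')\big)$, using only that the projection $\Pi_N$ is $1$-Lipschitz and that $\Pi_N\big(\partial^*D\cap(\Omega\times(s,s'))\big)\supseteq D_s\triangle D_{s'}$; since the strip perimeters are summable, it extracts along a given sequence $s_j\searrow t$ a rapidly convergent subsequence $\{\sigma_n\}$, constructs $D_t^+$ as $\bigcup_k\bigcap_{n\geq k}D_{\sigma_n}=\bigcap_k\bigcup_{n\geq k}D_{\sigma_n}$, and then needs a separate interleaving (parity) argument to show the limit set does not depend on the chosen sequence. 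You instead control $\big|D_s\triangle D_{s'}\big|$ by the vertical-derivative measure $\mu(I)=\big|\partial_{N+1}\chi_D\big|(\Omega\times I)$ through the one-dimensional slicing theory of $BV$. This is a sharper bound (only the vertical component of $\nabla\chi_D$, rather than the whole perimeter, though the sharpness is not needed), and, more importantly, it is a genuine Cauchy estimate valid for \emph{all} pairs of good heights in $(t,t+\delta)$ simultaneously; hence the full one-sided limit exists by completeness of $L^1(\Omega)$, and uniqueness of $D_t^+$ comes for free, with no subsequence extraction and no parity trick. The price is exactly the bookkeeping you flag as the main obstacle, and it is indeed the only point requiring care; it is standard, though: fixing a Borel representative of $\chi_D$, Fubini gives that for a.e.\ $s$ one has $\chi_{D_s}(x)=\tilde v_x(s)$ for $\H^N$-a.e.\ $x$ (with $\tilde v_x$ the good representative of the slice), and since $\mu$ is finite, $\mu(\{s\})=0$ for all but countably many $s$, at which heights $\tilde v_x$ is continuous at $s$ for a.e.\ $x$; so the good heights form a set of full measure, which suffices both for the Cauchy argument and for the lemma as it is used in the paper (where only a.e.-defined sections ever enter). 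Your proof of the countable-exception statement, namely $\big|D_t^+\triangle D_t^-\big|\leq\mu(\{t\})$ together with the fact that a finite measure has at most countably many atoms, is the same idea as the paper's (which bounds $\big|D_t^+\triangle D_t^-\big|$ by the perimeter of $D$ on the hyperplane $\R^N\times\{t\}$ via the same projection argument), merely expressed through $\mu$. In summary: the paper buys self-containedness, using nothing beyond Vol'pert and the Lipschitz projection; your route invests in the $BV$ slicing machinery of~\cite{AFP00book} and is rewarded with a cleaner statement --- the curve $s\mapsto\chi_{D_s}$ is a $BV$ curve into $L^1(\Omega)$ with variation controlled by $\mu$ --- from which existence, uniqueness, and the atom count all follow softly.
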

\begin{proof}
Let us fix $0\leq t <L$, and let $s_j\searrow t$ be any monotone decreasing sequence converging to $t$ and such that the section $D_{s_j}$ is well-defined for every $j$. Notice that
\[
P\Big(D; \Omega\times \big( s_{j+1}, s_j\big)\Big)= \H^N\Big(\partial^* D \cap \big(\Omega\times( s_{j+1}, s_j)\big)\Big)\,,
\]
and that, calling $\Pi_N:\R^{N+1}\to\R^N$ the projection on the first $N$ coordinates, we have
\[
\Pi_N\Big(\partial^* D \cap \big(\Omega\times( s_{j+1}, s_j)\big)\Big) \supseteq D_{s_{j+1}}\triangle D_{s_j}\,.
\]
Since the projection is $1$-Lipschitz, we deduce that
\begin{equation}\label{exrem}
P\Big(D; \Omega\times \big( s_{j+1}, s_j\big)\Big) \geq \big| D_{s_{j+1}}\triangle D_{s_j}  \big|\,.
\end{equation}
As a consequence, taking the sum on $j$, and recalling that $D$ has finite perimeter, we have that
\begin{equation}\label{eq:finite_sum}
+\infty > P(D) \geq \sum_{j\in\N} P\Big(D; \Omega\times \big( s_{j+1}, s_j\big)\Big) 
\geq \sum_{j\in\N} \big| D_{s_{j+1}}\triangle D_{s_j}  \big|\,.
\end{equation}
We now extract a subsequence $\{\sigma_n\}$ of $\{s_j\}$ as follows. By the finiteness of the sum in~\eqref{eq:finite_sum}, for any integer $n\ge 1$ there exists an index $j_n$ such that the tail of the series is bounded from above as
\begin{equation}\label{eq:stima_tail}
 \sum_{j\ge j_n} \big| D_{s_{j+1}}\triangle D_{s_j}  \big| < \frac{1}{2^n}\,.
\end{equation}
Moreover, we can also assume that $\{j_n\}$ is strictly monotone, and that
\begin{equation}\label{aspt}
\text{$j_n$ and $n$ have the same parity for each $n$.}
\end{equation}
We let the sequence $\{\sigma_n\} := \{s_{j_n}\}$, and we notice that, owing to~\eqref{eq:stima_tail} and the set-wise triangular inequality $A\triangle B \subseteq A\triangle C \cup C\triangle B$, the sections of $D$ individuated by the sequence of heights $\{\sigma_n\}$ satisfy
\begin{equation}\label{eq:control_section_subs}
\big| D_{\sigma_{n+1}}\triangle D_{\sigma_{n}}  \big| \le \sum_{j=j_n}^{j_{n+1}-1} \big| D_{s_{j+1}}\triangle D_{s_j}  \big|  < \frac 1{2^n}\,,\qquad \forall\, n\in\N\,.
\end{equation}
Calling now, for every $k\in\N$,
\begin{align*}
F_k:= \bigcap_{n\geq k}  D_{\sigma_n}\,, && G_k:= \bigcup_{n\geq k}  D_{\sigma_n}\,, 
\end{align*}
we trivially have that $F_k \subseteq D_{\sigma_k} \subseteq G_k$. Moreover, one can easily prove the set equality
\[
G_k \setminus F_k = \bigcup_{n\ge k} \left(D_{\sigma_{n+1}} \triangle D_{\sigma_{n}}\right)\,,
\]
which combined with~\eqref{eq:control_section_subs} yields the estimate $\big| G_k \setminus F_k\big| \leq 2^{1-k}$. Notice that the set equality
\begin{equation}\label{eq:defD+}
\bigcup_{k\in {\mathbb{N}}} F_k = \bigcap_{k\in\N} G_k\,,
\end{equation}
holds up to sets of measure zero, since the sets $F_k$ are monotonically increasing, the sets $G_k$ monotonically decreasing, $F_k\subset G_k$, and $\big| G_k \setminus F_k\big| \leq 2^{1-k}$. We then let $D^+_t$ be the set in~\eqref{eq:defD+}, which is well-defined. We have that
\begin{equation}\begin{split}\label{weaker}
\lim_{k\to\infty} \big| D_{\sigma_k} \triangle D^+_t \big| &= \lim_{k\to\infty} \big| D_{\sigma_k} \setminus D^+_t \big| +\big| D^+_t\setminus D_{\sigma_k} \big| \\
&\le \lim_{k\to\infty} \big| D_{\sigma_k} \setminus F_k \big| + \big| G_k \setminus D_{\sigma_k} \big|
 = \lim_{k\to\infty} \big| G_k \setminus F_k \big| =0\,,
\end{split}\end{equation}
where the second-to-last equality comes from the fact that $F_k\subseteq D_{\sigma_k}\subseteq G_k$. This property is in principle weaker than~\eqref{newcla}, because the set $D^+_t$ might depend on the particular choice of the sequence $\{s_j\}$ and of the subsequence $\{\sigma_n\}$. Therefore, it suffices to show that any choice of the sequence and of the relative subsequence yields the same limit set $D^+_t$. Consider two monotone sequences $s_j'\searrow t$ and $s_j''\searrow t$ satisfying
\[
\lim_{j\to\infty} \big| D_{s'_j} \triangle D' \big|=\lim_{j\to\infty} \big| D_{s''_j} \triangle D'' \big|=0
\]
for two different sets $D',\, D''$, and define a monotone sequence $s_j\searrow t$ in such a way that $\{s_{2j}\}$ is a subsequence of $\{s_j'\}$ and $\{s_{2j+1}\}$ is a subsequence of $\{s_j''\}$, which is clearly possible. Reasoning as before, one can find a subsequence $\{\sigma_n\}$ of $\{s_j\}$ and a set $D^+_t$ such that~\eqref{weaker} holds, but by construction and thanks to~\eqref{aspt} we deduce that $D^+_t$ must coincide both with $D'$ and $D''$, which is impossible since $D'$ and $D''$ are different. The contradiction shows that actually $D^+_t$ does not depend on the sequence, so that we have obtained~\eqref{newcla}.

The existence of the set $D^-_t$ can be of course obtained exactly in the same way. To conclude the proof, we only have to check that there can be at most countably many $t\in (0,L)$ for which $D_t^+\neq D_t^-$ in a measure-theoretic sense, that is, $|D_t^+\triangle D_t^-|>0$. By the same projection argument around~\eqref{exrem}, looking at the perimeter of $D$ in the strip $\mathbb{R}^N\times(t-\eps, t+\eps)$, and then letting $\eps$ to zero, we get that the perimeter of $D$ relative to the hyperplane $\mathbb{R}^N\times\{t\}$ is at least $|D^+_t \triangle D^-_t|$. Since $D$ has finite perimeter, this can happen at most countably many times.
\end{proof}

We are now ready to present the proof of the main estimate of this section.

\begin{proof}[Proof of \cref{thm:stime_h}]
The proof will be divided in some steps.
\step{I}{A weaker inequality.}
In this first step, we prove an estimate which is weaker than~\eqref{eq:estimate_h}, namely,
\begin{equation}\label{eq:weaker}
h(\Omega) \leq h(\Omega_L) \le  h(\Omega) + \frac{2}{L}\,.
\end{equation}

We start by proving the upper bound. We let $F$ be a Cheeger set for $\Omega$, that is, a set realizing the infimum in the definition of the Cheeger constant, and whose existence is ensured by the boundedness of $\Omega$, refer to~\cite[Proposition~3.5(iii)]{Leo15}. It is then enough to notice that
\[
h(\Omega_L) \leq \frac{P\big(F \times [0,L]\big)}{\big| F\times [0,L]\big|}
=\frac{L P(F) + 2 |F|}{L |F|}
= \frac{P(F)}{|F|} + \frac 2L
= h(\Omega) + \frac 2L\,,
\]
so that the right inequality in~\eqref{eq:weaker} is proved, and so it also is the right one in~\eqref{eq:estimate_h}.

We now turn our attention to the lower bound. This estimate is very easy to prove and it is not needed to prove the stronger one~\eqref{eq:estimate_h}, yet it contains the basic ideas we shall exploit to prove our main result. Let $D\subseteq\Omega_L$ be any set of finite perimeter. Then, as previously mentioned, by Vol'pert Theorem the section $D_t$ is a well-defined set with finite perimeter in $\R^N$ for a.e.\ $t\in [0,L]$. Furthermore, one has that $(\partial^* D)_t = \partial^* D_t$ ($\H^{N-1}$-a.e.) for a.e.\ $t$. Trivially, since $D$ has finite perimeter in $\mathbb{R}^{N+1}$, its reduced boundary $\partial^* D$ is $\H^{N}$-rectifiable~\cite[Theorem~3.59]{AFP00book}. Thus, by the coarea formula for rectifiable sets (see~\cite[Theorem~2.93 and Remark~2.94]{AFP00book}), we have that
\begin{align}
P(D) = \int_{\partial^* D} 1\,\mathrm{d}\H^{N} &\ge \int_{\partial^* D} \sqrt{1- \|\nu_D \cdot e_{N+1}\|^2}\,\mathrm{d}\H^{N}
\nonumber
\\
&
= \int_{\mathbb{R}}\int_{\partial^* D_t} 1\,\mathrm{d}\H^{N-1}\, \mathrm{d}t
= \int_{0}^L\int_{\partial^* D_t} 1\,\mathrm{d}\H^{N-1}\, \mathrm{d}t
= \int_0^L P(D_t)\,,
\label{eq:proiezione}
\end{align}
where by $P(D_t)$ we denote the perimeter of $D_t$ in $\R^N$, that is, $\H^{N-1}(\partial^* D_t)$. Since $D_t\subseteq\Omega$, we have $P(D_t)\geq h(\Omega) |D_t|$, where by $|D_t|$ we denote the measure of $D_t$ in $\R^N$, that is, $\H^N(D_t)$. Therefore,
\begin{equation}\label{eq:sloppy}
P(D) \geq \int_{0}^L P(D_t)\,\de t \geq \int_{0}^L h(\Omega) |D_t|\,\de t = h(\Omega) \int_{0}^L |D_t|\, \de t = h(\Omega) |D|\,,
\end{equation}
thus $P(D)/|D| \geq h(\Omega)$ for every $D\subseteq \Omega_L$ and then the left inequality in~\eqref{eq:weaker} is proved. 

We remark that the inequality in~\eqref{eq:proiezione} remains true adding on the right the measure of the section $D_0^+$ given by \cref{supercont}. This simple observation will be the starting point of the proof of the stronger inequality.

\step{II}{The ``minimal volume'' $v$, the gap $\eps$, the height $\tau$ and the volume up to $\tau$, $V$.}
This step is devoted to define two positive quantities $v$ and $\eps$, which depend only on $\Omega$, and two other quantities $\tau$ and $V$ that depend on the choice of one Cheeger set of the cylinder $\Omega_L$. 

As recalled in the previous step, in view of the boundedness of $\Omega$, there exist Cheeger sets for $\Omega$. In general there might be several different Cheeger sets, but their measure cannot be too small. In particular, given any Cheeger set $C$ of $\Omega$, one has
\begin{equation}\label{eq:lower_bound_meas}
|C| \geq \omega_N\left(\frac{N}{h(\Omega)}\right)^N\,,
\end{equation}
where $\omega_N$ is the $N$-dimensional Lebesgue measure of a unit-radius ball in $\mathbb{R}^N$, refer to~\cite[Proposition~3.5(v)]{Leo15}. We now let
\[
v := \inf \Big\{ |C|\,:\, \hbox{$C$ is a Cheeger set for $\Omega$}\Big\}\,,
\]
and we have that $v=v(\Omega)>0$ in view of~\eqref{eq:lower_bound_meas}. We then let
\[
\eps:= \inf \bigg\{ \frac{P(E)}{|E|},\, E\subseteq \Omega,\, |E| \leq v/2\bigg\} - h(\Omega)\,.
\]
It is clear that $\eps=\eps(v(\Omega), h(\Omega))$ and thus, definitively, $\eps=\eps(\Omega)$. It is simple to notice that $\eps>0$. Indeed, let $E$ be any subset of $\Omega$ with $|E|\leq v/2$. By definition of $v$, $E$ is not a Cheeger set for $\Omega$, and then
\[
\frac{P(E)}{|E|} > h(\Omega)\,.
\]
This only shows that $\eps\geq 0$. Argue now by contradiction and assume the existence of a sequence of sets $E_j\subseteq \Omega$ such that $|E_j|\leq v/2$ for every $j$, and
\begin{equation}\label{eq:conv_h_imp}
\frac{P(E_j)}{|E_j|} \to  h(\Omega)\,.
\end{equation}
The sequence of the characteristic functions $\Chi{E_j}$ is then bounded in $BV(\Omega)$, so that, up to a subsequence, we can assume that $\Chi{E_j}$ is weakly-star converging in $BV(\Omega)$ to some function $\varphi$. However, since $\Omega$ is bounded the convergence is strong in $L^1$, thus $\varphi$ is the characteristic function of some set $E_\infty$ with $|E_\infty|>0$, as otherwise~\eqref{eq:conv_h_imp} would be easily contradicted by using the isoperimetric inequality. Indeed, were $|E_\infty|=0$, we would have that $|E_j|\to 0$. Hence, calling $B_j$ any ball with the same measure of $E_j$, and $r_j$ its radius, we would have
\[
\frac{P(E_j)}{|E_j|} \ge \frac{P(B_j)}{|B_j|} = \frac{N \omega_N}{r_j} \to +\infty,
\]
because $r_j$ goes to zero as $|B_j|\to 0$, against~\eqref{eq:conv_h_imp}. Then, the lower semicontinuity of the perimeter implies that $E_\infty$ is a Cheeger set for $\Omega$ with measure less than $v/2$, which is impossible by definition of $v$. Hence, we conclude that $\eps>0$.

Let us now fix a Cheeger set $C^*$ of $\Omega_L$. Notice that for almost every $t\in [0,L]$, $(C^*)^+_t$ and $(C^*)^-_t$ coincide by \cref{supercont}, and by Vol'pert Theorem they also coincide with $C^*_t$. We define the two following quantities
\begin{align*}
\tau := \essinf \big\{ t \in [0,L],\, |C^*_t|\geq v/2\big\}\,,  &&
V := \Big| C^* \cap \big(\Omega\times [0,\tau]\big)\Big|\,,
\end{align*}
which depend also on the choice of the Cheeger set $C^*$. Nevertheless, we will be able to check the validity of~\eqref{eq:estimate_h}, independently from such a choice, exhibiting a constant $c$ depending only on the minimal volume $v$, the gap $\eps$, the measure $|\Omega|$, and the Cheeger constant $h(\Omega)$ and thus, definitively only on the set $\Omega$. 

\step{III}{The proof of the inequality.}
We shall now refine~\eqref{eq:sloppy}, arguing in different ways depending on the measure of the upper section of $C^*$ at height zero, which for the sake of convenience we denote by $C^*_0$ in place of $(C^*)^+_0$.

\emph{(i) The case $|C^*_0|> v/4$.}
We first assume that the bottom section of $C^*$ is not too small, namely, that $|C^*_0|>v/4$. Then, by refining~\eqref{eq:proiezione} adding, as already discussed, the extra term $ |C^*_0|$, and recalling that $P(C^*_t)\geq h(\Omega)|C^*_t|$ for all $t\in [0,L]$, we readily have
\begin{equation}\label{est1}
h(\Omega_L) = \frac{P(C^*)}{|C^*|} 
\geq \frac{\bal \int_{0}^L P(C^*_t)\,\de t + |C^*_0|\eal}{\bal\int_{0}^L |C^*_t|\, \de t \eal}
\geq \frac{\bal \int_{0}^L h(\Omega) |C^*_t|\,\de t + |C^*_0|\eal}{\bal\int_{0}^L |C^*_t|\, \de t \eal}
\geq h(\Omega) + \frac v {4|\Omega| L}\,,
\end{equation}
so the required estimate is obtained in this case.

\emph{(ii) The case $|C^*_0|\leq v/4$: when $V> v(8h(\Omega))^{-1}$.}
Notice that, by construction and by Step~II, $P(C^*_t)\geq (h(\Omega)+\eps) |C^*_t|$ for almost every $0\leq t \leq \tau$, while $P(C^*_t)\geq h(\Omega)|C^*_t|$ for almost every $\tau \leq t \leq L$. Therefore, again by integration we get
\begin{equation}\label{est2}\begin{split}
h(\Omega_L) 
&= \frac{P(C^*)}{|C^*|} 
\geq \frac{\bal \int_{0}^\tau P(C^*_t)\,\de t + \int_{\tau}^L P(C^*_t)\,\de t\eal}{|C^*|}\\
&\geq \frac{\bal \int_{0}^L h(\Omega) |C^*_t|\,\de t +  \int_{0}^\tau \eps  |C^*_t|\,\de t\eal}{|C^*|}
=h(\Omega) + \frac{\eps V}{|C^*|} > h(\Omega) + \frac{\eps v}{8 h(\Omega) |\Omega| L}\,,
\end{split}\end{equation}
and then the required estimate is obtained also in this case.

\emph{(iii) The case $|C^*_0|\leq v/4$: when $V\leq  v(8h(\Omega))^{-1}$ and $\tau < L$.}
We now assume that $\tau < L$, so that by definition of $\tau$ there is a sequence of $t_n\searrow \tau$ such that $C^*_{t_n}$ is well-defined and $|C^*_{t_n}|\ge \sfrac v2$. Arguing by projection as in the proof of \cref{supercont} for any $n$, we have
\[
P\Big( C^* ; \big( \Omega\times (0,t_n)\big)\Big) \geq \big| |C^*_{t_n}| - |C^*_0|\big| \ge \frac v2 - |C^*_0| \geq \frac v 4\,.
\]
Therefore, arguing as in the previous steps, we get
\[
\begin{split}
h(\Omega_L) 
= 
\frac{P(C^*)}{|C^*|}
&
\geq 
\frac{P\Big( C^* ; \big( \Omega\times (0,t_n)\big)\Big)+ \bal \int_{t_n}^L P(C^*_t)\,\de t\eal}{|C^*|}
\\
&
\geq 
\frac{\bal\frac v 4+ \int_{t_n}^L h(\Omega)|C^*_t|\,\de t\eal}{|C^*|}
= 
\frac{\bal\frac v 4+ h(\Omega)\left(|C^*|-V - \int_{\tau}^{t_n}|C^*_t|\,\de t \right) \eal}{|C^*|}\,.
\end{split}
\]
Letting now $n\to +\infty$, we have
\begin{equation}\label{est3}
h(\Omega_L) 
\ge
h(\Omega) + \frac {v-4 h(\Omega) V}{4 |C^*|} 
\geq  h(\Omega) + \frac v{8 |C^*|} 
\geq  h(\Omega) + \frac v{8 |\Omega| L} \,,
\end{equation}
so the required estimate is obtained also in this case.

\emph{(iv) The case $|C^*_0|\leq v/4$: when $V\leq v(8h(\Omega))^{-1}$ and $\tau = L$.}
Being $\tau=L$, one has $|C^*_t|< v/2$ for almost every $t\in [0,L]$. In this case, the estimate $P(C^*_t) \geq \big(h(\Omega)+\eps\big) |C^*_t|$ is true for almost every $0\leq t\leq L$, and then arguing as usual this time we get, using also that $L\geq 1$,
\begin{equation}\label{est4}
h(\Omega_L) \geq h(\Omega) + \eps \geq h(\Omega)+ \frac \eps L\,.
\end{equation}

We are now ready to conclude. By putting together~\eqref{est1}, \eqref{est2}, \eqref{est3} and~\eqref{est4}, the claim follows by choosing $c(\Omega)$ as
\[
c(\Omega) = \min \bigg\{\frac {v(\Omega)} {8|\Omega|} \,; \frac{ {\eps(\Omega)}  {v(\Omega)}}{8 h(\Omega) |\Omega|} \,;  {\eps(\Omega)}\bigg\} >0\,,
\]
which only depends on $\Omega$ and not on the choice of the Cheeger set $C^*$.
\end{proof}

\begin{rem}[The assumption $L\ge 1$]\label{rem:lge1.1} Notice that the assumption $L\ge 1$ is used only in Step III~(iv) in order to obtain the second inequality in~\eqref{est4}. In particular, fixed any $\bar L>0$, one gets
\[
h(\Omega_L) \ge h(\Omega) + \frac{\varepsilon \bar L}{L}, \qquad \text{for any $L\ge \bar L$},
\]
so that, up to possibly changing the constant $c(\Omega)$, \cref{thm:stime_h} holds for all $L\ge \bar L$ for some given positive $\bar L$.
\end{rem}

\begin{rem}[The assumption $L\ge 1$, in the convex case]\label{rem:lge1}
Assuming the cross-section $\Omega$ to be convex, one can relax the assumption all the way down to $L>0$. In order to prove it, it is enough to show that the function $f:(0, +\infty)\to \mathbb{R}$ defined as $f: L\mapsto L(h(\Omega_L)-h(\Omega))$ is well-detached from zero in a sufficiently small right neighborhood of the origin $(0, \bar L)$, owing also to \cref{rem:lge1.1}. Thanks to the reverse Cheeger inequality~\cite{Par17} (refer also to~\cite[Remark~1.1]{Bra18}), to the equality $\lambda_2(\Omega_L) = \lambda_2(\Omega) + \frac{\pi^2}{L^2}$ (see~\cite[Lemma~2.4]{BBP22}), and exploiting the upper bound in~\eqref{eq:weaker}, we have
\begin{equation}\label{eq:extL0}
\frac{\pi^2}{4} > \frac{\lambda_2(\Omega_L)}{h(\Omega_L)^2} \ge \frac{\lambda_2(\Omega) + \frac{\pi^2}{L^2}}{(h(\Omega) + \frac 2L)^2} \xrightarrow[L\to 0^+]{} \frac{\pi^2}{4}\,.
\end{equation}
Therefore, $h(\Omega_L)$ behaves as $2/L$ as $L$ approaches $0$. Thus, $\lim_{L\to 0^+}f(L) =2$, hence $f$ is well-detached from zero when sufficiently close to the origin. The convexity of the cross-section is required because we make use of the reverse Cheeger inequality which corresponds to the leftmost inequality in~\eqref{eq:extL0}.
\end{rem}

\begin{rem}[Boundedness assumption]
The hypothesis that $\Omega$ is bounded in \cref{thm:stime_h} can be slightly relaxed. A close inspection to the proof highlights how this is used only to ensure the existence of isoperimetric sets within $\Omega$ (and, in particular, of Cheeger sets). Thus, one could drop it and require that $\Omega$ supports the compact embedding $BV(\Omega) \hookrightarrow L^1(\Omega)$ (i.e., it has finite measure and supports a relative isoperimetric inequality, see~\cite[Section~9.1.7]{Maz11book}). 
\end{rem}

\begin{rem}
In dimension $2$, there is only one kind of (connected) cylinder: rectangles. In this case, the constant is well-known and there is a formula to compute it depending only on the length of the sides of the rectangle, see the discussion after~\cite[Theorem~3]{KL06} together with the correction done in~\cite[Open problem~1]{Kaw16}. Our estimates are obviously consistent with such a formula. In the planar setting, similar estimates have been proved for ``strips'' ($2$d waveguides), that one can think of as bended rectangles, refer to~\cite[Theorem~3.2]{KP11} and also to~\cite[Theorem~3.2]{LP16}. Notice that the case of $2$-dimensional non-connected cylinders reduces to the case of connected ones. Indeed, if $\Omega$ is a union of segments, $\Omega_L$ is the union of rectangles with sides parallel to the axes, that have the same height and as bases the segments defining $\Omega$. Trivially, $h(\Omega_L)$ equals the Cheeger constant of the rectangle with the largest base, that is, the cylinder built on the largest segment defining $\Omega$, which surely exists, since $\Omega$ has finite measure.
\end{rem}

\begin{rem}
In~\cite{KLV19} the authors consider unbounded waveguides, that is, roughly speaking cylinders whose spine is the image of a generic unbounded curve $\gamma$ rather than a straight line. In~\cite[Remark~1]{KLV19} they essentially prove the upper bound~\eqref{eq:estimate_h} for the bounded waveguides $\gamma([0,L])\oplus B_r$ (topped with two half-balls), while they give a weaker lower bound independent of the length $L$, see~\cite[Theorem~1]{KLV19}.
\end{rem}

\section{Application}\label{sec:appl}

In this last section, we exploit the cylindrical estimates of \cref{thm:stime_h} to prove some properties on the shape functional $\Fp[\,\cdot\,]$ defined as
\[
\Fp[E] 
:= \displaystyle{\frac{\lambda^{\sfrac 1p}_p(E)}{h(E)}}\,,
\]
for $p\in (1, +\infty]$ with the convention that for $p=+\infty$ we let $\lambda^{\sfrac {1}{p}}_{p}(E) = \rho(E)^{-1}$, where this latter denotes the \emph{inradius} of the set $E$. Throughout the section we shall denote by $\mathbb{K}^N$ the class of convex subsets of $\mathbb{R}^N$. For the sake of notation, we also let
\[
\widetilde m_N := \inf_{E\subset\R^{N}} \Fp[E] \qquad \text{and} \qquad m_N := \inf_{E\in\mathbb{K}^{N}} \Fp[E]\,,
\]
without stressing the dependence on $p$ as this plays no role in the following.

\begin{thm}\label{thm:strict_decreasing}
For any fixed $p\in (1, +\infty]$, if there exist bounded minimizers of $\Fp[\,\cdot\,]$ among sets
\begin{itemize}
\item[(i)] in the Euclidean space $\mathbb{R}^N$, then $\widetilde m_{N+1} < \widetilde m_N$;
\item[(ii)] in the class of convex sets $\mathbb{K}^N$, then $m_{N+1} < m_N$.
\end{itemize}
\end{thm}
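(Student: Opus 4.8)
The plan is to test the $(N+1)$-dimensional infima with the cylinder built over a bounded minimizer of the $N$-dimensional problem. Fix $p\in(1,+\infty]$ and let $\Omega$ be a \emph{bounded} minimizer of $\Fp$ in $\R^N$ (resp.\ in $\mathbb{K}^N$), so that $\Fp[\Omega]=\widetilde m_N$ (resp.\ $\Fp[\Omega]=m_N$). For $L\ge 1$ consider $\Omega_L=\Omega\times(0,L)$; since $\Omega_L\subset\R^{N+1}$, and since $\Omega_L$ is convex whenever $\Omega$ is, it is always an admissible competitor for the corresponding $(N+1)$-dimensional problem, whence $\widetilde m_{N+1}\le\Fp[\Omega_L]$ (resp.\ $m_{N+1}\le\Fp[\Omega_L]$). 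It therefore suffices to produce a single $L$ with $\Fp[\Omega_L]<\Fp[\Omega]$; I will in fact show this for all sufficiently large $L$. The boundedness of $\Omega$ is what lets me invoke \cref{thm:stime_h} and, for $p<\infty$, work with a genuine first eigenfunction.

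The argument rests on two estimates with \emph{different} rates in $1/L$. From \cref{thm:stime_h} the denominator grows at least like $c(\Omega)/L$, namely $h(\Omega_L)\ge h(\Omega)+c(\Omega)/L$; this lower bound (not merely the upper one) is the decisive ingredient. The complementary claim is that the numerator grows strictly slower: for $p<\infty$,
\[
\lambda_p(\Omega_L)\le\lambda_p(\Omega)+o(1/L)\qquad\text{as }L\to\infty.
\]
To establish this I would use a separated test function. Let $v$ be the positive, $L^p$-normalized first eigenfunction on $\Omega$ (bounded by elliptic regularity), and let $\psi_L$ be the piecewise-linear cutoff on $[0,L]$ equal to $1$ on $[\delta,L-\delta]$ and vanishing at the endpoints, with $\delta=\delta(L)\to\infty$ and $\delta/L\to0$, say $\delta=\sqrt L$. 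Inserting $u(x,t):=v(x)\psi_L(t)$ into the Rayleigh quotient, the central slab where $\psi_L\equiv1$ contributes exactly $(L-2\delta)\lambda_p(\Omega)$ to the energy and $(L-2\delta)$ to the $L^p$-mass, while the two boundary layers, where $|\nabla u|^2=\psi_L^2|\nabla v|^2+v^2(\psi_L')^2$ with $\psi_L'=\pm1/\delta$, must be estimated. Using $(a+b)^{p/2}-a^{p/2}\le\tfrac p2\,b\,(a+b)^{p/2-1}$ for $p\ge2$ and subadditivity of $s\mapsto s^{p/2}$ for $1<p<2$, the boundary-layer energy exceeds $\tfrac{2\delta}{p+1}\lambda_p(\Omega)$ by a term that is $O(\delta^{-1})$ when $p\ge2$ and $O(\delta^{1-p})$ when $1<p<2$, while the total $L^p$-mass equals $L-\tfrac{2p}{p+1}\delta$. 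After division the terms proportional to $L$ collapse to $\lambda_p(\Omega)$, leaving $\lambda_p(\Omega_L)-\lambda_p(\Omega)=O(\delta^{-1}/L)+O(\delta^{1-p}/L)=o(1/L)$ for $\delta=\sqrt L$. Controlling this boundary layer for the nonlinear energy is the main obstacle; for $p=2$ one may bypass it entirely by quoting the exact identity $\lambda_2(\Omega_L)=\lambda_2(\Omega)+\pi^2/L^2$ used in \cref{rem:lge1}.

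The case $p=+\infty$ is immediate and requires none of the eigenvalue analysis: the inradius of a long cylinder is governed by its cross-section, so $\rho(\Omega_L)=\rho(\Omega)$ once $L>2\rho(\Omega)$, and then
\[
\Fp[\Omega_L]=\frac{1}{\rho(\Omega)\,h(\Omega_L)}<\frac{1}{\rho(\Omega)\,h(\Omega)}=\Fp[\Omega]
\]
follows directly from $h(\Omega_L)>h(\Omega)$, which is guaranteed by \cref{thm:stime_h}.

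Finally I would combine the two rates for $p<\infty$. Writing $\lambda_0:=\lambda_p(\Omega)$ and $h_0:=h(\Omega)$, and raising the eigenvalue bound to the power $1/p$, we have $\lambda_p^{\sfrac1p}(\Omega_L)\le\lambda_0^{\sfrac1p}+o(1/L)$; together with $h(\Omega_L)\ge h_0+c(\Omega)/L$, a first-order expansion gives
\[
\Fp[\Omega_L]-\Fp[\Omega]\le\frac{h_0\,o(1/L)-\lambda_0^{\sfrac1p}\,c(\Omega)/L}{h_0\big(h_0+c(\Omega)/L\big)}=-\frac{\lambda_0^{\sfrac1p}\,c(\Omega)}{h_0^2\,L}+o(1/L),
\]
which is strictly negative for all $L$ large, since the $1/L$ contribution coming from the Cheeger gain dominates the lower-order eigenvalue correction. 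Hence $\Fp[\Omega_L]<\Fp[\Omega]$ for such $L$, and the competitor inequalities recorded in the first paragraph yield $\widetilde m_{N+1}<\widetilde m_N$ in case (i) and $m_{N+1}<m_N$ in case (ii).
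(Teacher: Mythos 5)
Your proposal is correct and follows essentially the same route as the paper's proof: both test the $(N+1)$-dimensional infimum with the cylinders $\Omega_L$ over a bounded minimizer $\Omega$ (convexity of $\Omega_L$ handling case~(ii)), both hinge on the lower bound $h(\Omega_L)\ge h(\Omega)+c(\Omega)/L$ from \cref{thm:stime_h} beating a numerator correction of order strictly smaller than $1/L$, and both dispose of $p=+\infty$ via $\rho(\Omega_L)=\rho(\Omega)$ for $L$ large. The only divergence is in sourcing the eigenvalue estimate: the paper simply quotes $\lambda_p^{\sfrac1p}(\Omega_L)\le\lambda_p^{\sfrac1p}(\Omega)+O\big(L^{-\min\{p,2\}}\big)$ from \cite[Lemma~2.4]{BBP22}, whereas you re-derive an $o(1/L)$ bound by hand with the separated test function $v(x)\psi_L(t)$ and boundary layers of width $\delta=\sqrt L$. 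Your computation checks out --- for $p\ge2$ the layer correction is $O(\delta^{-1})$ and needs only $v\in L^\infty$ (true by Moser iteration on bounded $\Omega$) together with $\nabla v\in L^p$ and H\"older, not a global gradient bound; for $1<p<2$ subadditivity gives $O(\delta^{1-p})$, and both are $o(1/L)$ after division since $\delta=\sqrt L$ --- so your version is self-contained where the paper's is cited, at the price of a slightly weaker but fully sufficient rate.
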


\begin{proof}
We only prove point~(i), as the proof of point~(ii) is completely analogous. Let us denote  by $\Omega$ a bounded minimizer of the functional in $\mathbb{R}^N$ and let us consider the cylinders $\Omega_L$ with cross-section $\Omega$ and height $L\ge 1$. 

Let us start with the case $1<p<+\infty$. First, we recall the upper bounds to $\lambda_p^{\sfrac 1p}(\Omega_L)$ proved in~\cite[Lemma~2.4]{BBP22}
\begin{alignat*}{3}
\lambda_p^{\sfrac 1p}(\Omega_L) &\le \left(\lambda_p^{\sfrac 2p}(\Omega)+ \frac{c}{L^2}\right)^{\frac 12}\,,
\qquad &&\text{if $p\ge 2$,}\\
\lambda_p^{\sfrac 1p}(\Omega_L) &\le \left(\lambda_p(\Omega)+ \frac{c}{L^p}\right)^{\frac 1p}\,,
&&\text{if $p\in(1, 2)$,}
\end{alignat*}
where $c=c(p)$ is explicit but not needed for our purposes. These imply that for $L\gg1$, i.e., for large enough values of $L$, one has
\[
\lambda^{\sfrac 1p}_p(\Omega_L) \le \lambda^{\sfrac 1p}_p(\Omega) + O\left( \frac{1}{L^{\min\{p, 2\}}}\right)\,.
\]
Combining this inequality with the lower bound to $h(\Omega_L)$ in~\eqref{eq:estimate_h} for $L\gg1$ have that
\begin{align*}
\widetilde m_{N+1} \le \frac{\lambda^{\sfrac 1p}_p(\Omega_L)}{h(\Omega_L)} 
\le \frac{\lambda^{\sfrac 1p}_p(\Omega)}{h(\Omega)}\cdot \frac{1+ O\left(\sfrac{1}{L^{\min\{p, 2\}}}\right))}{1+ \frac{ {c(\Omega)}}{Lh(\Omega)}} 
< \frac{\lambda^{\sfrac 1p}_p(\Omega)}{h^p(\Omega)} = \widetilde m_{N}\,.
\end{align*}

If otherwise $p=+\infty$, one has that for $L$ large enough $\rho(\Omega)=\rho(\Omega_L)$, and thus one can conclude in the same manner still owing to the lower bound in~\eqref{eq:estimate_h}.
\end{proof}

The above theorem becomes particularly useful when combined with the existence criterion~\cite[Theorem~3.6]{BBP22}, which was first devised in~\cite[Theorem~1.2]{Fto21} in the case $p=2$. We stress that the criterion only works when dealing with convex sets. For the sake of convenience, we recall it below along with a sketch of the proof, also highlighting how convexity plays a major role.

\begin{thm}[Theorem~3.6 of~\cite{BBP22}]\label{thm:ex_criterion}
For any fixed $p\in (1, +\infty]$, if $m_{N+1} < m_N$ holds, then there exists a bounded minimizer of $\Fp[\,\cdot\,]$ over $\mathbb{K}^{N+1}$.
\end{thm}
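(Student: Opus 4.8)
The plan is to run the direct method over $\mathbb{K}^{N+1}$, exploiting that $\Fp[\,\cdot\,]$ is invariant under rigid motions and dilations (both $\lambda_p^{\sfrac1p}$ and $h$ scale as the inverse of a length, so their ratio is scale invariant). First I would fix a minimizing sequence $\{C_k\}\subset\mathbb{K}^{N+1}$ with $\Fp[C_k]\to m_{N+1}$ and, using the invariances, normalize it so that $\diam(C_k)=1$ and each $C_k$ lies in a fixed ball. Since the $C_k$ are then convex bodies in a fixed bounded region, the Blaschke selection theorem yields a subsequence converging in the Hausdorff distance to a convex set $K$ with $\diam(K)=1$; in particular $K\neq\emptyset$. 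This is the first place where convexity is essential, as it is precisely what grants compactness of the normalized minimizing sequence.

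Next I would split according to the dimension of $K$. If $K$ has nonempty interior, then it is a genuine $(N+1)$-dimensional convex body and, after a suitable translation placing the origin in $\mathrm{int}(K)$, Hausdorff convergence together with convexity allows one to sandwich $(1-\delta_k)K\subseteq C_k\subseteq(1+\delta_k)K$ with $\delta_k\to0$. By the scaling and domain-monotonicity of $\lambda_p$, and by the continuity of the Cheeger constant along Hausdorff-converging nondegenerate convex bodies, this forces $\lambda_p(C_k)\to\lambda_p(K)$ and $h(C_k)\to h(K)$, whence $\Fp[K]=\lim_k\Fp[C_k]=m_{N+1}$. Thus $K$ is the sought bounded minimizer and we are done in this case.

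It remains to rule out the degenerate case $\dim K\le N$, i.e.\ $K$ contained in a hyperplane, and this is where the hypothesis $m_{N+1}<m_N$ enters. Here $\rho(C_k)\to0$ and $C_k$ becomes thin in the $j:=N+1-\dim K\ge1$ directions transverse to $\mathrm{aff}(K)$. The goal is to prove $\liminf_k\Fp[C_k]\ge m_N$, which contradicts $\Fp[C_k]\to m_{N+1}<m_N$ and hence excludes this case. The mechanism is that a thin convex body is prism-like over a $j$-dimensional convex transverse profile: being contained in a slab of vanishing transverse width, $\lambda_p(C_k)$ is bounded below by the eigenvalue of that slab, while, via suitable competitors and the cylindrical control of the Cheeger constant provided by \cref{thm:stime_h} (which ties $h$ of a thin prism to $h$ of its cross-section), $h(C_k)$ is bounded above by the commensurate quantity; the net effect is that the degenerating problem reduces to a $j$-dimensional one, yielding $\liminf_k\Fp[C_k]\ge m_j$. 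Since letting the height diverge in the long-cylinder construction (as in the proof of \cref{thm:strict_decreasing}) gives $m_{d+1}\le m_d$ for every $d$, and $j\le N$, one has $m_j\ge m_N$, which closes the argument.

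The main obstacle is exactly this lower bound in the degenerate regime: making rigorous the reduction of the thin convex body to its lower-dimensional transverse profile, and controlling $\lambda_p$ from below and $h$ from above by matching quantities so that their ratio cannot drop below $m_N$. Convexity is indispensable throughout—both for the Blaschke compactness and for the fact that thinness forces a controlled, prism-like geometry, so that the degeneration genuinely reduces to a lower-dimensional shape problem. A generic thin \emph{open} set need not reduce in this way, which is consistent with the existence question for arbitrary open sets being, as recalled in the introduction, still open.
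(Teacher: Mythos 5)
Your overall skeleton coincides with the paper's: scale invariance, a normalized minimizing sequence, Blaschke selection, a dichotomy between a nondegenerate limit (where continuity of $\lambda_p$ and $h$ on convex bodies finishes) and a degenerate one, excluded via the hypothesis $m_{N+1}<m_N$ and the cylindrical estimate. (The paper normalizes by volume rather than diameter, so its degenerate alternative is ``unbounded diameters'', which is exactly your collapse case after rescaling.) The problem is that the entire content of the theorem sits in the step you yourself flag as ``the main obstacle'' and never carry out: the lower bound $\liminf_k \Fp[C_k]\ge m_N$ along degenerating sequences. As sketched, your mechanism cannot produce it. Bounding $\lambda_p(C_k)$ from below by the eigenvalue of a slab of the transverse width $w_k$ only yields the one-dimensional constant $\pi_p^p/w_k^p$, not the eigenvalue of a $j$-dimensional transverse profile; and a thin convex body is in general \emph{not} prism-like --- its transverse sections vary from point to point (consider the convex hull of a disk and a nearby off-plane point), so ``prism-like over a $j$-dimensional convex transverse profile'' is precisely what would need proof. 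Worse, to conclude a bound by $m_j$ (or $m_N$) the section used to bound $\lambda_p$ from below and the cross-section used to bound $h$ from above must be the \emph{same} set, or controlled copies of one another: a ratio of the eigenvalue of one section to the Cheeger constant of a different section has no a priori lower bound in terms of any $m_d$. Your claim $\liminf\ge m_j$ with $j$ the codimension of the collapse is both stronger than needed and unsubstantiated.

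The paper fills exactly this hole with a cone-and-inscribed-cylinder matching, in the unrescaled picture where $\diam E_j\to\infty$ with $|E_j|$ fixed. Place the diameter along $e_{N+1}$, and among the hyperplane sections $E_j\cap\{x_{N+1}=t\}$ pick $\omega_j$ minimizing $\lambda_p$, at a height $t_j\ge \diam E_j/2$ after a reflection; a Fubini-type argument (\cite[Lemma~3.3]{BBP22}) gives $\lambda_p(E_j)\ge\lambda_p(\omega_j)$. By convexity the cone with apex at the origin over $\omega_j$ lies inside $E_j$ and contains, for each $\alpha\in(0,1)$, a cylinder $C^\alpha_j$ with base $\alpha\omega_j$ and height $(1-\alpha)t_j$; monotonicity of $h$ together with the \emph{upper} bound of \cref{thm:stime_h} then gives
\[
h(E_j)\le h(C^\alpha_j)\le \frac{h(\omega_j)}{\alpha}+\frac{2}{(1-\alpha)t_j}\,,
\qquad\text{whence}\qquad
\Fp[E_j]\ \ge\ \alpha\, m_N\,\frac{h(\omega_j)}{h(\omega_j)+\frac{2\alpha}{(1-\alpha)t_j}}\,.
\]
Since $t_j\to\infty$ while $h(\omega_j)$ stays bounded away from zero (the sections have uniformly bounded measure, so the isoperimetric inequality applies), letting $j\to\infty$ and then $\alpha\to1$ yields $\liminf_j\Fp[E_j]\ge m_N$ --- note the reduction is always to dimension $N$, irrespective of the codimension of the collapse, which sidesteps your harder $m_j$ claim. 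It is this matching of the \emph{same} section $\omega_j$ in both numerator and denominator, made possible by the cone construction, that your proposal lacks; without it the argument is incomplete at its decisive step.
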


\begin{proof}[Sketch of the proof of \cref{thm:ex_criterion}]
First notice that, using the same argument of the proof of~\cref{thm:strict_decreasing} with the weaker lower bound~\eqref{eq:weaker} to $h(\Omega_L)$, one has that $m_{N+1} \le m_N$.

Second, the key observation behind the criterion is that, refer to~\cite[Proposition~3.5]{BBP22}, if a sequence $\{E_j\}_j$ of equimeasurable $(N+1)$-dimensional sets is such that the sequence of diameters $\{\diam E_j\}_j$ is unbounded, then $m_N \le \liminf_j \Fp[ E_j]$. Taking this for granted, if the strict inequality $m_{N+1} < m_N$ holds, one can rule out that minimizing sequence $\{E_j\}_j$ have unbounded diameters. Therefore, and here convexity matters, one can invoke the Blaschke Selection Principle and extract a subsequence converging in the Hausdorff metric to a bounded, convex set, which is easily shown to be a minimizer.

For the sake of completeness, we briefly sketch also the proof of the key observation, which also relies on the convexity of the sets $\{E_j\}_j$. 

Fixed any $j$, up to a translation and a rotation, one can assume that both the origin and the point $(0, \dots, 0, \diam E_j)$ belong to $\partial E_j$. We consider the section $\omega_j := E_j \cap \{\,x_{N+1}=t_j\,\}$, chosen as the section attaining
\[
\inf_{t\in [0, \diam E_j]}\lambda_p (E_j \cap \{\,x_{N+1}=t\,\})\,,
\]
which exists in virtue of the Hausdorff continuity of the sections $E_j \cap \{\,x_{N+1}=t\,\}$ in $\R^N$ (being $E_j$ convex), and the continuity of $\lambda_p$ with respect to the Hausdorff metric. We mention that, up to a reflection, one can also assume that 
\begin{equation}\label{eq:tj_big}
t_j \ge \frac{\diam E_j}{2}\,.
\end{equation}
Owing to the fact that the sections $E_j \cap \{\,x_{N+1}=0\,\}$ and $E_j \cap \{\,x_{N+1}=\diam E_j\,\}$ are empty, and owing to Fubini Theorem it can be seen that
\begin{equation}\label{eq:prima}
\lambda_p (E_j) \ge \lambda_p (\omega_j)\,,
\end{equation}
refer to~\cite[Lemma~3.3]{BBP22}.  Fixed a parameter $\alpha \in (0,1)$, one now considers the cylinders $C^\alpha_j \subset E_j$ of base $\alpha \omega_j$ and height $t_j(1-\alpha)$ contained in the cone given by the convex envelope of the origin with $\omega_j$, see also \cref{fig:idea_of_proof}. 
\begin{figure}
\begin{tikzpicture}
\clip (-1, -5) rectangle (9, 5); 
\begin{scope}[shift={(-.4125,0)}]
\draw[black, rounded corners=10mm] plot coordinates{(0,0) (2,2) (4,3) (7,3) (9, 0) (6.5, -3) (3,-3)}--cycle;
\filldraw[black!20, opacity=1] (5.5,3) -- (5.5,-3) -- (0.4125,0) -- (5.5,3);
\filldraw[black!40, opacity=1] (5.5, .5) -- (5.5, -.5) -- (1.2598, -.5) -- (1.2598, .5) -- (5.5,.5);
\node[] at (1.7,0.225) {$C^\alpha_j$};
\draw[thick] (5.5,3) -- (5.5,-3);
\node[] at (5.8,2.3) {$\omega_j$};
\end{scope}
\draw[black, ->] (0,-4) -- (0,4);
\draw[black, ->] (-1,0) -- (8.5,0);
\end{tikzpicture}
\caption{The cylinder $C^\alpha_j$ that is used to prove \cref{thm:ex_criterion}.}
\label{fig:idea_of_proof}
\end{figure}
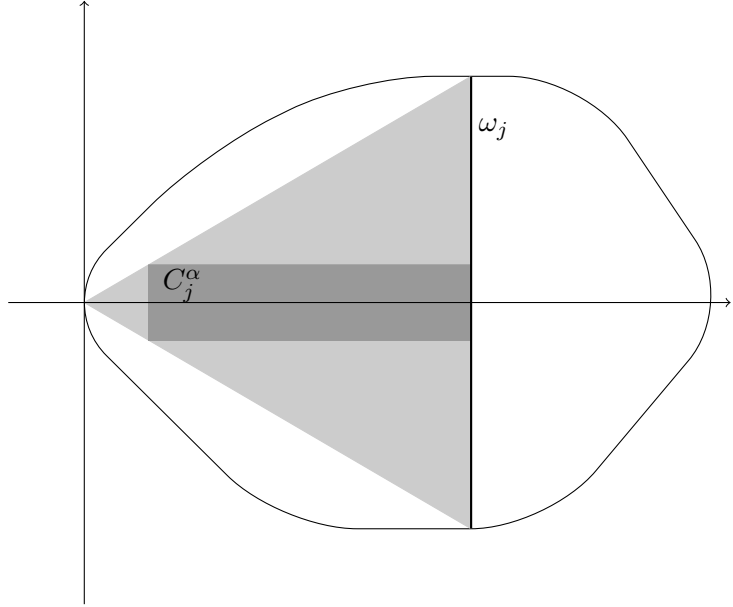
By the monotonicity of the Cheeger constant, we then have
\begin{equation}\label{eq:seconda}
h(C^\alpha_j) \ge h(E_j).
\end{equation}
Using now~\eqref{eq:prima} and~\eqref{eq:seconda}, multiplying and dividing by $h(\omega_j)$, using the scaling properties of the Cheeger constant, and owing to the upper estimate in~\eqref{eq:estimate_h} one has
\[
\Fp[E_j] = \frac{\lambda^{\sfrac 1p}_p (E_j) }{h(E_j)} \ge \alpha m_{N} \frac{h(\omega_j)}{h(\omega_j) + \frac{2\alpha}{(1-\alpha)t_j}}\,.
\]
Taking the inferior limit as $j\to+\infty$, using~\eqref{eq:tj_big}, and then letting $\alpha\to 1$, the claim follows.
\end{proof}

It is clear that by combining the above theorem with \cref{thm:strict_decreasing}(ii), it follows that existence of minimizers over $\mathbb{K}^M$ implies existence over $\mathbb{K}^N$ for any $N\ge M$. Hence, to conclude existence in all dimensions it would suffice to prove it for $N=1$. We show how the induction works and that it can start in the following theorem.

\begin{thm}\label{thm:ex_lp/h^p}
For any fixed $p\in (1, +\infty]$, the sequence $\{ m_N \}$ is strictly decreasing, and there exist bounded minimizers of $\Fp[\,\cdot\,]$ in $\mathbb{K}^N$ for any $N\in \N$.
\end{thm}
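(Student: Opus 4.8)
The plan is to prove both assertions simultaneously by induction on $N$, using the two results already at our disposal as the engine of the recursion. Write $P(N)$ for the statement that $\Fp[\,\cdot\,]$ admits a bounded minimizer over $\mathbb{K}^N$. The point is that \cref{thm:strict_decreasing}(ii) and \cref{thm:ex_criterion} interlock: for every $N$, the validity of $P(N)$ forces the strict inequality $m_{N+1}<m_N$ via \cref{thm:strict_decreasing}(ii), and this strict inequality in turn forces $P(N+1)$ via \cref{thm:ex_criterion}. Thus a single base case $P(1)$ propagates existence to all dimensions, while the intermediate strict inequalities $m_{N+1}<m_N$, which hold for every $N$ precisely because $P(N)$ does, deliver the strict monotonicity of the sequence $\{m_N\}$.

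It therefore remains only to establish the base case $P(1)$, which I would obtain from the scale invariance of the functional. Every bounded convex subset of $\R$ of positive measure is an interval, and for any dilation factor $t>0$ one has $h(tE)=t^{-1}h(E)$ and $\lambda_p(tE)=t^{-p}\lambda_p(E)$ (respectively $\rho(tE)=t\,\rho(E)$ when $p=+\infty$), whence $\Fp[tE]=\Fp[E]$. Since all bounded intervals arise from one another by translation and dilation, $\Fp[\,\cdot\,]$ is constant on this whole class, with a common value that is finite and strictly positive by~\eqref{eq:cheeger_bound}. The only unbounded convex subsets of $\R$ are half-lines and $\R$ itself, whose vanishing Cheeger constant prevents them from lowering the infimum below that value; hence $m_1$ equals the common value of $\Fp[\,\cdot\,]$ on bounded intervals and is attained by every one of them, giving $P(1)$.

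Putting these ingredients together, the induction yields $P(N)$ for all $N\ge 1$, and applying \cref{thm:strict_decreasing}(ii) in each dimension gives $m_{N+1}<m_N$ for every $N$, which is the claimed strict decrease. I do not expect a serious obstacle in the argument for this statement in isolation: all of the analytic weight has been moved into \cref{thm:strict_decreasing} and \cref{thm:ex_criterion}, whose proofs rest on the cylindrical estimate~\eqref{eq:estimate_h}. The only genuinely new input is the one-dimensional base case, and that is elementary once one observes the dilation invariance of $\Fp[\,\cdot\,]$. The sole conceptual subtlety worth checking carefully is that the two cited theorems really do chain together with the indices aligned, so that the recursion is self-sustaining from $N=1$ onward.
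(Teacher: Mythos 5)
Your proposal is correct and follows essentially the same route as the paper: the identical induction in which \cref{thm:strict_decreasing}(ii) and \cref{thm:ex_criterion} chain together from the base case $N=1$, yielding both existence and strict monotonicity of $\{m_N\}$. The only difference is cosmetic: the paper cites~\cite[Proposition~2.1]{BBP22} for the one-dimensional base case, whereas you verify it directly via the dilation invariance of $\Fp[\,\cdot\,]$ on intervals, which is a perfectly sound substitute.
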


\begin{proof}
Assume that a minimizer exists in $\mathbb{K}^N$. Then \cref{thm:strict_decreasing}(ii) implies that the infimum over $\mathbb{K}^{N+1}$ is strictly less than that in $\mathbb{K}^N$. In turn, \cref{thm:ex_criterion} implies that a minimizer exists over $\mathbb{K}^{N+1}$. Therefore, the existence of minimizers in dimension $N=1$ would immediately imply both the strictly monotone decreasing behavior of the sequence and the existence of minimizers over $\mathbb{K}^{N}$ for all $N\in \N$. It is well-known that any interval minimizes the functional over $\mathbb{K}^1$, refer for instance to~\cite[Proposition~2.1]{BBP22}, thus the claim follows.
\end{proof}

\begin{rem}
So far \cref{thm:ex_lp/h^p} had only been proved in the $2$-dimensional case (for $p\ge 2$), and minimizers conjectured to exist in any dimension, and we here give a positive answer. We refer to~\cite[Proposition~5.2]{Par17} for $p=2$, where it is also conjectured that the minimum is attained for the square, and to~\cite[Theorem~3.8]{BBP22} for $p\ge 2$.
\end{rem}

It has already been observed in~\cite{BBP22, Fto21} that Cheeger's inequality is attained asymptotically as $N\to +\infty$. Indeed, it is easy to see that, fixed any $p$, a sequence of $N$-dimensional unit balls $\{B^N_1\}$ achieves the equality in the limit when $N\to +\infty$. In particular, for $p=2$, this follows from the explicit knowledge of the first eigenvalue of the $N$-dimensional ball and its asymptotic behavior, refer to~\cite{Tri49} and the computations carried out in~\cite[Theorem~1.2]{Fto21}. For general $p$ it follows from the estimates of~\cite[Lemma~2.3]{BBP22} and the computations carried out in~\cite[Theorem~2.6]{BBP22}, which give
\[
\frac{1}{p} \le m_N \le \Fp[B^N_1]\,\, \underset{\scriptscriptstyle N\to+\infty}{\sim}\,\, \frac{1}{p}.
\]
The results in~\cite{BBP22, Fto21} left open to the possibility that starting from some dimension $\bar{N}$, one had $m_N = \frac 1p$ for all $N\ge \bar N$. The strict monotonicity proved in \cref{thm:ex_lp/h^p} implies that this is not the case and thus Cheeger's inequality is attained \emph{only} asymptotically.

\begin{cor}
Cheeger's inequality $\Fp[E] \ge \frac 1p$, among convex sets $E\in\mathbb{K}^N$, is saturated \emph{only} asymptotically as $N\to +\infty$.
\end{cor}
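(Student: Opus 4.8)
The plan is to read off this corollary almost immediately from the strict monotonicity already established in \cref{thm:ex_lp/h^p}, combined with Cheeger's inequality \eqref{eq:cheeger_bound} and the asymptotic behaviour recalled just above the statement. First I would record the two bookend facts: on the one hand, \eqref{eq:cheeger_bound} gives the uniform lower bound $m_N \ge \frac 1p$ for every $N$; on the other hand, the balls $B_1^N$ show $m_N \to \frac 1p$ as $N\to+\infty$, so the value $\frac 1p$ is indeed approached. What remains, and what the word \emph{only} encodes, is that $\frac 1p$ is never actually attained in any finite dimension.

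The key step is an elementary argument by contradiction that turns strict monotonicity into strict separation from $\frac 1p$. Suppose that saturation occurred at some finite dimension, i.e.\ that there were a $\bar N$ with $m_{\bar N} = \frac 1p$. By \cref{thm:ex_lp/h^p} the sequence $\{m_N\}$ is strictly decreasing, so $m_{\bar N + 1} < m_{\bar N} = \frac 1p$, which contradicts the lower bound $m_{\bar N+1}\ge \frac 1p$ coming from \eqref{eq:cheeger_bound}. Hence $m_N > \frac 1p$ for every $N\in\N$.

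To phrase this as a genuine statement about saturation of the inequality $\Fp[E]\ge\frac 1p$ (rather than just about the infimum $m_N$), I would invoke the second half of \cref{thm:ex_lp/h^p}, namely that bounded minimizers exist in $\mathbb{K}^N$ for every $N$. Thus for each $N$ there is a convex set $E\in\mathbb{K}^N$ with $\Fp[E] = m_N > \frac 1p$, the infimum being attained; since the minimum strictly exceeds $\frac 1p$, no convex set in any finite dimension realizes equality. Together with $m_N \to \frac 1p$, this shows that Cheeger's inequality is saturated only in the asymptotic regime $N\to+\infty$, as claimed.

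There is essentially no hard obstacle here, since all the substance has been packaged into \cref{thm:ex_lp/h^p}; the only point requiring a word of care is the logical passage from strict monotonicity to the strict inequality $m_N>\frac 1p$ at every finite $N$, which is exactly the short contradiction above and which crucially uses that the lower bound $\frac 1p$ holds simultaneously for all dimensions.
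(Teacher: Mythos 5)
Your proposal is correct and follows exactly the paper's (implicit) reasoning: the strict monotonicity of $\{m_N\}$ from \cref{thm:ex_lp/h^p} together with the uniform lower bound $m_N\ge\frac 1p$ from \eqref{eq:cheeger_bound} rules out $m_{\bar N}=\frac 1p$ at any finite $\bar N$ by the same short contradiction, while the balls $B^N_1$ give $m_N\to\frac 1p$. The only superfluous step is your appeal to the existence of minimizers: since $\Fp[E]\ge m_N>\frac 1p$ for every $E\in\mathbb{K}^N$ by the very definition of the infimum, no convex set can realize equality regardless of whether the infimum is attained.
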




\bibliographystyle{plainurl}

\bibliography{cilindri}

\end{document}